\documentclass[a4paper]{amsart}
\usepackage{amssymb, enumitem}
\usepackage[all]{xy}
\usepackage{hyperref, aliascnt}
\usepackage{mathrsfs}

\def\today{\number\day\space\ifcase\month\or   January\or February\or
   March\or April\or May\or June\or   July\or August\or September\or
   October\or November\or December\fi\   \number\year}
\pagenumbering{arabic}

\newtheorem{lma}{Lemma}[section]

\newaliascnt{thmCt}{lma}
\newtheorem{thm}[thmCt]{Theorem}
\aliascntresetthe{thmCt}

\newaliascnt{propCt}{lma}
\newtheorem{prop}[propCt]{Proposition}
\aliascntresetthe{propCt}

\newtheorem*{thm*}{Theorem}
\newtheorem*{cor*}{Corollary}
\newtheorem*{prop*}{Proposition}

\theoremstyle{definition}

\newaliascnt{pgrCt}{lma}

\aliascntresetthe{pgrCt}

\newaliascnt{rmkCt}{lma}
\newtheorem{rmk}[rmkCt]{Remark}
\aliascntresetthe{rmkCt}

\newaliascnt{pbmCt}{lma}
\newtheorem{pbm}[pbmCt]{Problem}
\aliascntresetthe{pbmCt}

\newaliascnt{ntnCt}{lma}
\newtheorem{ntn}[ntnCt]{Notation}
\aliascntresetthe{ntnCt}

\newcommand{\Z}{{\mathbb{Z}}}
\newcommand{\R}{{\mathbb{R}}}
\newcommand{\C}{{\mathbb{C}}}
\newcommand{\N}{{\mathbb{N}}}
\newcommand{\Hi}{{\mathcal{H}}}
\newcommand{\B}{{\mathcal{B}}}

\newcommand{\spec}{{\mathrm{sp}}}
\newcommand{\supp}{{\mathrm{supp}}}
\newcommand{\Max}{{\mathrm{Max}}}
\newcommand{\QSL}{Q\!S\!L}
\newcommand{\SL}{S\!L}

\newcommand{\lcg}{locally compact group}
\newcommand{\I}{\infty}

\title{Quotients of Banach algebras acting on $L^p$-spaces}
\date{\today}

\author[Eusebio Gardella]{Eusebio Gardella}
\address{Eusebio Gardella
Mathematisches Institut, Fachbereich Mathematik und Informatik der
Universit\"at M\"unster, Einsteinstrasse 62, 48149 M\"unster, Germany.}
\email{gardella@uni-muenster.de}

\author{Hannes Thiel}
\address{Hannes Thiel
Mathematisches Institut, Fachbereich Mathematik und Informatik der
Universit\"at M\"unster, Einsteinstrasse 62, 48149 M\"unster, Germany.}
\email{hannes.thiel@uni-muenster.de}
\urladdr{http://wwwmath.uni-muenster.de/u/hannes.thiel/}

\thanks{The first named author was partially supported by the D.~K. Harrison Prize from the
University of Oregon.
The second named author was partially supported by the Deutsche Forschungsgemeinschaft (SFB 878).}
\subjclass[2010]{Primary:
47L10, 
43A15, 
Secondary:
46J10  	
}
\keywords{Quotients of Banach algebras, $L^p$-space, Banach algebra of $p$-pseudofunctions}

\begin{document}

\begin{abstract}
We show that the class of Banach algebras that can be isometrically represented on an $L^p$-space, for $p\neq 2$, is not closed under quotients.
This answers a question asked by Le Merdy 20 years ago.
Our methods are heavily reliant on our earlier study of Banach algebras generated by invertible isometries of $L^p$-spaces.
\end{abstract}

\maketitle

\section{Introduction}

An operator algebra is a closed subalgebra of the algebra $\B(\Hi)$ of bounded linear operators on a Hilbert space $\Hi$.
If $A$ is an operator algebra and $I\subseteq A$ is a closed, two-sided ideal, then the quotient Banach
algebra $A/I$ is again an operator algebra, that is, it can be isometrically represented on a Hilbert space.
This classical result is due to Lumer and Bernard, although the commutative case (when $A$ is a uniform algebra) was proved earlier by Cole.

In Problem~3.8 of \cite{Mer96QuotSubalgBX}, Christian Le Merdy raised the question of whether this result can be generalized to Banach algebras acting on $L^p$-spaces for $p\in [1,\infty)$.
More precisely, if $\mathscr{E}$ is a class of Banach spaces, we say that a Banach algebra $A$ is an \emph{$\mathscr{E}$-operator algebra} if there exist a Banach space $E$ in $\mathscr{E}$ and an isometric homomorphism $\varphi\colon A\to \B(E)$.
Given $p\in [1,\infty)$, we consider the class $L^p$ of $L^p$-spaces, the class $\SL^p$ of Banach spaces that are isometrically isomorphic to subspaces of $L^p$-spaces, and the class $\QSL^p$ of Banach spaces that are quotients of $\SL^p$-spaces.

The Bernard-Cole-Lumer Theorem asserts that $L^2$-operator algebras are closed under quotients.
In Corollary~3.2 of \cite{Mer96QuotSubalgBX}, Le Merdy showed that $\QSL^p$-operator algebras are closed under quotients.
In Corollary~1.5.2.3 of \cite{Jun96Habilitationsschrift}, Marius Junge showed the analogous result for $\SL^p$-operator algebras.
Since the classes $\QSL^2$ and $\SL^2$ both agree with $L^2$, the results of Le Merdy and Junge are generalizations of the Bernard-Cole-Lumer Theorem.

As the authors point out in \cite{Mer96QuotSubalgBX} and \cite{Jun96Habilitationsschrift}, the arguments used there are not suitable to deal with the more natural class of $L^p$-operator algebras.
Indeed, the question of whether $L^p$-operator algebras are closed under quotients, for $p\neq 2$, remained open for 20 years.
The case $p=1$ of this question has recently been answered negatively in Theorem~6.2 of~\cite{GarThi14arX:LpGenInvIsom},
using a classical result of Malliavin on the failure of spectral synthesis for $\ell^1(\Z)$.
In this paper, we answer negatively the remaining cases of the question. (Even for $p=1$, we construct new examples of quotients
of $\ell^1(\Z)$ which cannot be represented on an $L^1$-space.
These are, in particular, semisimple, unlike those constructed in \cite{GarThi14arX:LpGenInvIsom}.)

For $p\in[1,\infty)$, we consider the algebra $F^p(\Z)$ of $p$-pseudofunctions on $\Z$.
This algebra was introduced in the early 70's by Herz in \cite{Her73SynthSubgps}, who denoted it
$PF_p(\Z)$.
Algebras of $p$-pseudofunctions (also for \lcg{s} other than $\Z$) have been studied in a number of places:
\cite{NeuRun09ColumnRowQSL}; \cite{Phi13arX:LpCrProd}; \cite{GarThi14arX:LpGenInvIsom}; \cite{GarThi14arX:LpGpAlg}; \cite{GarThi14arX:FunctLpGpAlg}, just to list a few.

The algebra $F^p(\Z)$ is a semisimple, commutative Banach algebra with spectrum $S^1$.
Given an open set $V$ in $S^1$, we let $I_V$ denote the largest closed two-sided ideal of $F^p(\Z)$ that is supported on $V$.
For $p\in [1,\infty)\setminus\{2\}$, and $V$ neither empty nor dense in $S^1$, we show that $F^p(\Z)/I_V$ is not an $L^p$-operator algebra;
see \autoref{thm:main}.
In fact, we even show that there is no injective, contractive homomorphism with closed range from $F^p(\Z)/I_V$ to the algebra of bounded linear operators on \emph{any} $L^q$-space for $q\in [1,\I)$;
see \autoref{rmk:mainExtended}.

Given the recent attention received by $L^p$-operator algebras, deciding whether these are closed
under quotients becomes more relevant and technically useful.
For example, consider the $L^p$-analogs $\mathcal{O}_n^p$ of the Cuntz algebras;
see \cite{Phi13arX:LpAnalogsCtz}.
These are all simple, and any contractive, non-zero representation of any of them
on an $L^p$-space is automatically injective (in fact, isometric).
For $p=2$, these two properties are well-known to be equivalent.
However, for $p\neq 2$, they are not, since quotients of $L^p$-operator algebras are not
in general representable on $L^p$-spaces.
These two properties of $\mathcal{O}_n^p$ therefore require separate and independent proofs.
A similar problem arises with the $L^p$-analogs of irrational rotation algebras;
see \cite{GarThi15pre:IrrRotAlgLp}.

\section{The example}

A \emph{representation} of a Banach algebra $A$ on a Banach space $E$ is a contractive homomorphism $\varphi\colon A\to\B(E)$.
The representation is called \emph{unital} (\emph{injective}, \emph{isometric}) if $\varphi$ is.

We begin with some preparatory results.
Our first lemma allows us to assume that a unital $L^p$-operator algebra has a unital, isometric representations on some $L^p$-space.

\begin{lma}
\label{lma:Corner}
Let $A$ be a unital Banach algebra and let $p\in [1,\I)$.
If $A$ can be (injectively, isometrically) represented on an $L^p$-space, then there is a \emph{unital} (injective, isometric) representation of $A$ on an $L^p$-space.
\end{lma}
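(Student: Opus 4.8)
The plan is to start from any representation $\varphi\colon A\to\B(E)$ on an $L^p$-space $E$ (assumed injective or isometric, according to the case) and to compress it onto the range of the idempotent $\varphi(1_A)$, after verifying that this range is again an $L^p$-space. We may assume $\varphi\neq 0$, and we use that, by convention, $\|1_A\|=1$. Set $e:=\varphi(1_A)$. Since $\varphi$ is a homomorphism, $e^2=\varphi(1_A^2)=\varphi(1_A)=e$, so $e$ is an idempotent; contractivity of $\varphi$ gives $\|e\|\le\|1_A\|=1$, while any nonzero idempotent satisfies $\|e\|\ge 1$, so $\|e\|=1$. Thus $e$ is a contractive projection on $E$ with closed range $F:=eE$. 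For every $a\in A$ the identities $\varphi(a)=\varphi(1_A a)=e\varphi(a)$ and $\varphi(a)=\varphi(a1_A)=\varphi(a)e$ hold. The first shows $\varphi(a)E\subseteq F$, so in particular $\varphi(a)F\subseteq F$; the second shows that $\varphi(a)$ is completely determined by its restriction to $F$, via $\varphi(a)y=\varphi(a)(ey)$ for all $y\in E$.

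The key step, and the one I expect to be the main point, is to show that $F$ is isometrically an $L^p$-space. This is exactly where the hypothesis $E\in L^p$ is used, rather than merely that $E$ is a Banach space: I invoke the classical theorem (due to Ando, Douglas, and Tzafriri) that the range of a contractive projection on an $L^p$-space, for $1\le p<\I$, is isometrically isomorphic to an $L^p$-space. With $F$ recognized as an $L^p$-space, I define
\[
  \psi\colon A\to\B(F),\qquad \psi(a):=\varphi(a)|_F,
\]
which is well defined since $\varphi(a)F\subseteq F$. It is a homomorphism because $\varphi$ is one and each $\varphi(b)$ maps $F$ into $F$, so that $\psi(a)\psi(b)x=\varphi(a)\varphi(b)x=\psi(ab)x$ for $x\in F$. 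It is unital: for $x\in F$ we have $ex=x$, hence $\psi(1_A)=e|_F=\mathrm{id}_F$. Contractivity is immediate, since $\|\psi(a)\|_{\B(F)}=\sup_{x\in F,\,\|x\|\le 1}\|\varphi(a)x\|\le\|\varphi(a)\|\le\|a\|$.

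It remains to transfer injectivity and isometry. Using $\varphi(a)=\varphi(a)e$ together with $\|e\|=1$, for any $y\in E$ one has $\|\varphi(a)y\|=\|\varphi(a)(ey)\|=\|\psi(a)(ey)\|\le\|\psi(a)\|\,\|ey\|\le\|\psi(a)\|\,\|y\|$, whence $\|\varphi(a)\|\le\|\psi(a)\|$; combined with the reverse inequality from contractivity, this gives $\|\psi(a)\|=\|\varphi(a)\|$ for every $a\in A$. Consequently, if $\varphi$ is injective then so is $\psi$, and if $\varphi$ is isometric then $\|\psi(a)\|=\|\varphi(a)\|=\|a\|$, so $\psi$ is isometric. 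Apart from the contractive-projection theorem, every step is a routine compression argument; the only subtleties to be careful about are the normalization $\|1_A\|=1$ and the correct invocation and citation of the statement that contractively complemented subspaces of $L^p$-spaces are themselves $L^p$-spaces.
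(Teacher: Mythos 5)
Your proposal is correct and follows essentially the same route as the paper: compress by the idempotent $e=\varphi(1_A)$, invoke Tzafriri's theorem that the range of a contractive projection on an $L^p$-space is again an $L^p$-space, and check that the cut-down homomorphism inherits injectivity and isometry. Your verification that $\|\psi(a)\|=\|\varphi(a)\|$ (via $\varphi(a)=\varphi(a)e$) is exactly the detail the paper leaves as ``clear,'' so the two arguments coincide.
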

\begin{proof}
Let $F$ be an $L^p$-space and let $\psi\colon A\to \B(F)$ be a contractive homomorphism.
Then $e=\psi(1)$ is an idempotent with $\|e\|=1$ in $\B(F)$.
By Theorem~6 in \cite{Tza69ContrProjLp}, the range $E$ of $e$ is an $L^p$-space.
The cut-down homomorphism
\[
\varphi\colon A\to \B(E)\cong \psi(1)\B(F)\psi(1)
\]
is the desired map.
It is clear that $\varphi$ is injective or isomeric whenever $\psi$ is, respectively.
\end{proof}

If $A$ is a commutative unital Banach algebra, we will denote by $\Gamma_A\colon A\to C(\Max(A))$ its Gelfand transform, which is natural in the following sense.
If $\varphi\colon A\to B$ is a unital homomorphism between commutative unital Banach algebras $A$ and $B$, then
the assignment $\Max(B)\to \Max(A)$ given by $I\mapsto \varphi^{-1}(I)$ defines a contractive homomorphism
$\Gamma(\varphi)\colon C(\Max(A))\to C(\Max(B))$ making the following diagram commute:
\[
\xymatrix@R-5pt{
A\ar[rr]^-{\Gamma_A}\ar[d]_-\varphi && C(\Max(A))\ar[d]^-{\Gamma(\varphi)} \\
B\ar[rr]_-{\Gamma_B}&& C(\Max(B)).
}
\]

\begin{rmk}
\label{rem:SpectraS1}
We recall the following fact about spectra of elements in Banach
algebras. If $B$ is a unital Banach algebra, $A$ is a subalgebra containing the unit of $B$,
and $a$ is an element of $A$ such that $\spec_A(a)=\partial \spec_A(a)$ as susbets of $\C$,
then $\spec_A(a)=\spec_B(a)$. This can be deduced, for example, from Theorem~ in~\cite{Pal94Book};
see also the inclusions in the middle of page 219 there.

In particular, if the spectrum of an element of a Banach algebra is a subset of $S^1$, 
then it is equal to the spectrum computed in any unital algebra containing the given algebra.
\end{rmk}

Let $p\in[1,\infty)$.
We first recall some facts about $F^p(\Z)$;
the reader is referred to \cite{Phi13arX:LpCrProd}, \cite{GarThi14arX:LpGpAlg}, and \cite{GarThi14arX:FunctLpGpAlg} for more general versions of these statements, as well as their proofs.

The Banach algebra $F^p(\Z)$ is a subalgebra of $\B(\ell^p(\Z))$ that can be defined as follows.
Denote by $\lambda_p\colon \ell^1(\Z)\to \B(\ell^p(\Z))$ the left regular representation of $\Z$, which is
given by $\lambda_p(f)\xi=f\ast\xi$ for $f\in \ell^1(\Z)$ and $\xi\in \ell^p(\Z)$.
Then
\[
F^p(\Z)=\overline{\lambda_p(\ell^1(\Z))}\subseteq\B(\ell^p(\Z)).
\]
The algebra $F^p(\Z)$ also admits a description as a universal algebra with respect to invertible isometries on $L^p$-spaces.
The algebra $F^p(\Z)$ is clearly commutative, and its maximal ideal space is canonically homeomorphic to $S^1$.
Moreover, its Gelfand transform is injective. For $p=1$, there is a natural isometric identification
$F^1(\Z)=\ell^1(\Z)$,
while for $p$ and $q$ in $[1,\I)$, there is an abstract isometric isomorphism $F^p(\Z)\cong F^q(\Z)$ if and
only if $p$ and $q$ are either equal or (H\"older) conjugate;
see Theorem~3.4 in~\cite{GarThi14arX:FunctLpGpAlg}.

\begin{ntn}
\label{ntn:A_IV}
Let $p\in[1,\infty)$.
We let $\Gamma$ denote the Gelfand transform from $F^p(\Z)$ to $C(S^1)$.
For an open subset $V$ of $S^1$, we set
\[
I_V=\Gamma^{-1}(C_0(V)),
\]
which is a closed, two-sided ideal in $F^p(\Z)$.
We will abbreviate $F^p(\Z)$ to $A$, and the quotient $F^p(\Z)/I_V$ to
$A_V$.
The Gelfand transform $\Gamma_{A_V}\colon A_V\to C(\Max(A_V))$ will be abbreviated to $\Gamma_V$.
\end{ntn}

\begin{prop}
\label{prop:FirstStep}
Let $p\in[1,\infty)$, and let $V$ be a nonempty open subset of $S^1$.
We let $A=F^p(\Z)$, $I_V$ and $A_V$ be as in \autoref{ntn:A_IV}.
Suppose that there exists $q\in [1,\I)$ such that $A_V$ is an $L^q$-operator algebra.
Then the Gelfand transform $\Gamma_V\colon A_V\to C(S^1\setminus V)$ is an isomorphism (although not necessarily isometric).
In particular, and identifying $F^p(\Z)$ with a subalgebra of $C(S^1)$ via $\Gamma$, it follows that every continuous function on $S^1\setminus V$ is the restriction of a function in $F^p(\Z)$.
\end{prop}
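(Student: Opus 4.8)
The plan is to reduce the statement to our classification of Banach algebras generated by a single invertible isometry of an $L^q$-space from \cite{GarThi14arX:LpGenInvIsom}. Write $u=\lambda_p(\delta_1)\in A$ for the canonical generator (the bilateral shift), so that $u$ is an invertible isometry of $\ell^p(\Z)$ with $\|u\|=\|u^{-1}\|=1$, and $u,u^{-1}$ generate $A=F^p(\Z)$. Let $\bar u\in A_V$ denote the image of $u$; then $\bar u$ and $\bar u^{-1}$ generate $A_V$. Assuming $A_V$ is an $L^q$-operator algebra, \autoref{lma:Corner} provides a \emph{unital} isometric representation $\varphi\colon A_V\to\B(E)$ on an $L^q$-space $E$. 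Put $v=\varphi(\bar u)$, so $v^{-1}=\varphi(\bar u^{-1})$. Since $\|\bar u\|_{A_V}\le\|u\|=1$ and $\|\bar u^{-1}\|_{A_V}\le\|u^{-1}\|=1$ and $\varphi$ is isometric, we get $\|v\|\le 1$ and $\|v^{-1}\|\le 1$; the chain $\|\xi\|=\|v^{-1}v\xi\|\le\|v\xi\|\le\|\xi\|$ then shows that $v$ is an invertible isometry of $E$.

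Next I would pin down the spectrum of $v$. The characters of $A_V=A/I_V$ are exactly the characters of $A$ annihilating $I_V$; under the identification $\Max(A)=S^1$, every element of $I_V$ vanishes on $S^1\setminus V$, while regularity of $F^p(\Z)$ (which contains the regular Wiener algebra $F^1(\Z)=\ell^1(\Z)$) supplies, for each $z\in V$, an element of $I_V$ not vanishing at $z$. Hence $\Max(A_V)=S^1\setminus V$, so the codomain $C(\Max(A_V))$ of $\Gamma_V$ is indeed $C(S^1\setminus V)$, and $\spec_{A_V}(\bar u)$, being the range of $\Gamma_V(\bar u)$, equals $S^1\setminus V$; as $V$ is nonempty this is a \emph{proper} closed subset of $S^1$. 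Because $\varphi$ is an isometric isomorphism onto its closed image, $\spec_{\varphi(A_V)}(v)=\spec_{A_V}(\bar u)=S^1\setminus V\subseteq S^1$, and since any subset of $S^1$ equals its own boundary in $\C$, \autoref{rem:SpectraS1} upgrades this to $\spec_{\B(E)}(v)=S^1\setminus V$.

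Now I would invoke the classification. The closed unital subalgebra of $\B(E)$ generated by the invertible isometry $v$ and its inverse is precisely $\varphi(A_V)$. By the description of such algebras in \cite{GarThi14arX:LpGenInvIsom}, when the spectrum of the generating isometry is a proper closed subset of $S^1$ the Gelfand transform of the generated algebra is an \emph{isomorphism} onto $C(\spec(v))=C(S^1\setminus V)$. Transporting this isomorphism back through $\varphi$ and checking that the resulting map coincides with $\Gamma_V$ on the generators $\bar u,\bar u^{-1}$ --- hence on all of $A_V$ --- shows that $\Gamma_V\colon A_V\to C(S^1\setminus V)$ is a continuous bijection; its inverse is then automatically bounded by the open mapping theorem, so $\Gamma_V$ is an isomorphism. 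The final assertion is just surjectivity of $\Gamma_V$: every $g\in C(S^1\setminus V)$ has the form $\Gamma_V(a+I_V)=\Gamma(a)|_{S^1\setminus V}$ for some $a\in F^p(\Z)$, that is, $g$ extends to a function in $F^p(\Z)$.

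The main obstacle is the appeal to \cite{GarThi14arX:LpGenInvIsom}: the whole force of the proposition lies in the fact that an invertible isometry of an $L^q$-space whose spectrum misses an arc of $S^1$ generates a full function algebra $C(\spec(v))$ via its Gelfand transform. Verifying the hypotheses of that classification --- the isometry property of $v$ and, above all, the computation that $\spec(v)$ is a proper closed subset of $S^1$ --- is the crux; the remaining work is the reduction via \autoref{lma:Corner} and the routine identification of the transported map with $\Gamma_V$.
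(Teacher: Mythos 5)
Your proposal follows essentially the same route as the paper's proof: reduce to a unital isometric representation $\varphi\colon A_V\to\B(E)$ via \autoref{lma:Corner}, show that $v=\varphi(\bar u)$ is an invertible isometry with $\spec_{\B(E)}(v)=S^1\setminus V$ using \autoref{rem:SpectraS1}, and then invoke the classification of Banach algebras generated by an invertible isometry of an $L^q$-space from \cite{GarThi14arX:LpGenInvIsom}. Your supporting details are sound, and in places more complete than the paper's: the regularity argument (via the dense copy of the Wiener algebra) justifying $\Max(A_V)=S^1\setminus V$ is a correct proof of what the paper simply declares ``clear,'' and the norm chain establishing that $v$ is a surjective isometry is exactly what is implicitly used.

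There is, however, one gap as written: your blanket appeal to the classification in \cite{GarThi14arX:LpGenInvIsom} silently includes $q=2$, but the result actually needed there (part~(1) of Corollary~5.21, which is what the paper cites) is stated only for $q\in[1,\infty)\setminus\{2\}$; its proof rests on the Banach--Lamperti description of isometries of $L^q$-spaces, which is unavailable for $q=2$. Since the proposition's hypothesis allows $q=2$, that case must be handled separately. The fix is classical and short --- the paper does it in two lines: for $q=2$ the invertible isometry $v$ is a unitary, so $\varphi(A_V)$, being generated by $v$ and $v^{-1}=v^*$, is a $C^*$-algebra, and Gelfand's theorem gives that $\Gamma_{\varphi(A_V)}$ is an (isometric) isomorphism onto $C(S^1\setminus V)$. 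With this case added, your argument is complete and matches the paper's.
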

\begin{proof}
It is clear that $\Max(A_V)$ is canonically homeomorphic to $S^1\setminus V$, so the range of $\Gamma_V$
can be canonically identified with a subalgebra of $C(S^1\setminus V)$.
We claim that $A_V$ is semisimple.
Denote by $\pi\colon A\to A_V$ the canonical quotient map, and
denote by $r\colon C(S^1)\to C(S^1\setminus V)$
the restriction map.
Observe that naturality of the Gelfand transform implies
that the following diagram is commutative:
\begin{equation}\label{diagram}
\xymatrix@R-5pt{
A\ar[rr]^-{\Gamma}\ar[d]_-\pi && C(S^1)\ar[d]^-{r}\\
A_V\ar[rr]_-{\Gamma_V}&& C(S^1\setminus V)
.}
\end{equation}
It is a standard fact that semisimplicity of $A_V$ is equivalent to injectivity of $\Gamma_V$. Let
$a\in A_V$ satisfy $\Gamma_V(a)=0$. Find $b\in A$ such that $\pi(b)=a$. Then $r(\Gamma(b))=0$, so
$\Gamma(b)$ belongs to $\ker(r)=C_0(V)$. In other words, $b\in \Gamma^{-1}(C_0(V))=I_V$. Hence $a=0$
and $A_V$ is semisimple.

It follows that there are natural identifications

\[ A_V\cong \frac{\Gamma(F^p(\Z))}{\Gamma(F^p(\Z))\cap C_0(V)}\cong \frac{\Gamma(F^p(\Z))+C_0(V)}{C_0(V)}.\]

Observe that $A_V$ is generated by the image $\pi(u)$ of the canonical generator $u$ of $A=F^p(\Z)$, which is an invertible isometry.
Suppose that there exist $q\in [1,\I)$, an $L^q$-space $E$, and an isometric representation $\varphi\colon A_V\to \B(E)$. By \autoref{lma:Corner}, we can assume that $\varphi$ is unital.
It is clear that $\varphi(\pi(u))$ generates $\varphi(A_V)$. Since $\varphi$ is unital, $\varphi(\pi(u))$ is an invertible
isometry of an $L^q$-space. Moreover, using \autoref{rem:SpectraS1} at the first step, we have
\[\spec_{\B(E)}(\varphi(\pi(u)))=\spec_{\varphi(A_V)}(\varphi(\pi(u)))=\spec_{A_V}(\pi(u))=S^1\setminus V.\]

We claim that the Gelfand transform $\Gamma_{\varphi(A_V)}\colon\varphi(A_V)\to C(S^1\setminus V)$ is an isomorphism.
Once we show this, it will follow that $\Gamma_V$ is also an isomorphism.

First, $\Gamma_{\varphi(A_V)}$ is clearly injective by semisimplicity of $A_V$.
Suppose that $q=2$. Then $\varphi(A_V)$ is a $C^*$-algebra, because it is generated by an invertible
isometry of a Hilbert space (a unitary), and $A_V$ is therefore self-adjoint. The claim is then an immediate consequence of Gelfand's theorem (and in this case $\Gamma_{\varphi(A_V)}$ is isometric).
Assume now that $q\in [1,\I)\setminus\{2\}$. In this case, and since the spectrum of $\varphi(\pi(u))$ in $\B(E)$ is not
the whole circle, the result follows from part~(1) of Corollary~5.21 in~\cite{GarThi14arX:LpGenInvIsom}.
The claim is proved, and the first part of the proposition follows.

For the second assertion in the statement of the proposition, consider the commutative diagram
in~(\ref{diagram}). It follows that for every $f\in C(S^1\setminus V)$,
there exists $g\in A=F^p(\Z)$ such that $\Gamma_V(\pi(g))=f$. Regarding $g$ as a function on $S^1$, this is
equivalent to $g|_{S^1\setminus V}=f$.
\end{proof}

Let $\theta\in\R$.
Then it is easy to show that the homeomorphism $h_\theta\colon S^1\to S^1$ given by
$h_\theta(\zeta)=e^{2\pi i \theta}\zeta$ for $\zeta\in S^1$ induces an isometric automorphism of $F^p(\Z)$. (We warn the reader that it is not in general true that any homeomorphism of $S^1$ induces an isometric, or even contractive, automorphism of $F^p(\Z)$.
In fact, when $p\neq 2$, the only homeomorphisms of $S^1$ that do so are the rotations and compositions of rotations with the homeomorphism $\zeta\mapsto\overline{\zeta}$ of $S^1$.)

The following is the main result of this paper.
It asserts that for every $p\neq 2$, there is a quotient algebra of $F^p(\Z)$ that has no isometric representation on any $L^q$-space for $q\in[1,\infty)$.

\begin{thm}
\label{thm:main}
Let $p\in [1,\I)\setminus\{2\}$, let $V$ be a nonempty open subset of $S^1$.
We let $A=F^p(\Z)$, $I_V$ and $A_V$ be as in \autoref{ntn:A_IV}.
Assume that $V$ is not dense in $S^1$.
Then $A_V$ cannot be isometrically represented on \emph{any} $L^q$-space for $q\in [1,\I)$.
\end{thm}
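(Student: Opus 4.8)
The plan is to argue by contradiction, using \autoref{prop:FirstStep} as the only nontrivial input and then globalizing its local conclusion with the help of the rotations. Suppose that $A_V$ can be isometrically represented on some $L^q$-space with $q\in[1,\I)$. Then \autoref{prop:FirstStep} applies and tells us that, identifying $F^p(\Z)$ with a subalgebra of $C(S^1)$ via $\Gamma$, every continuous function on the closed set $S^1\setminus V$ is the restriction of an element of $F^p(\Z)$. Since $V$ is not dense, the open set $S^1\setminus\overline{V}$ is nonempty, so we may fix a closed arc $J\subseteq S^1\setminus V$ with nonempty interior. The goal is to upgrade this ``local'' surjectivity of the restriction map, available on the arc $J$, to the global identity $\Gamma(F^p(\Z))=C(S^1)$, which is impossible for $p\neq 2$.

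To carry out the globalization I would exploit the rotations $h_\theta$ of the preceding paragraph. Each $h_\theta$ induces an isometric automorphism of $F^p(\Z)$ carrying $I_V$ onto $I_{h_\theta(V)}$, so that $A_{h_\theta(V)}$ is isometrically isomorphic to $A_V$ and is therefore again an $L^q$-operator algebra. Applying \autoref{prop:FirstStep} to $h_\theta(V)$ shows that every continuous function on $S^1\setminus h_\theta(V)$, and in particular on the arc $h_\theta(J)\subseteq S^1\setminus h_\theta(V)$, is the restriction of an element of $F^p(\Z)$. The interiors of the arcs $h_\theta(J)$, as $\theta$ ranges over $\R$, cover $S^1$, so by compactness there are finitely many rotations $\theta_1,\dots,\theta_k$ whose arcs already cover $S^1$. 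I would then fix a smooth partition of unity $\chi_1,\dots,\chi_k$ with $\sum_j\chi_j=1$ and with $\supp\chi_j$ contained in the interior of $h_{\theta_j}(J)$. Being smooth, each $\chi_j$ has absolutely summable Fourier coefficients, so $\chi_j\in\ell^1(\Z)=F^1(\Z)\subseteq F^p(\Z)$, with Gelfand transform equal to $\chi_j$ itself.

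Given an arbitrary $f\in C(S^1)$, for each $j$ I would pick $g_j\in F^p(\Z)$ with $\Gamma(g_j)=f$ on $h_{\theta_j}(J)$; since $\supp\chi_j\subseteq h_{\theta_j}(J)$, the products satisfy $\chi_j\Gamma(g_j)=\chi_j f$ on all of $S^1$. As $F^p(\Z)$ is an algebra, $\chi_j g_j\in F^p(\Z)$, and hence $\sum_j\chi_j g_j\in F^p(\Z)$ has Gelfand transform $\sum_j\chi_j f=f$. Thus $\Gamma(F^p(\Z))=C(S^1)$.

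This is the desired contradiction: for $p\in[1,\I)\setminus\{2\}$ the Gelfand transform of $F^p(\Z)$ is \emph{not} surjective onto $C(S^1)$ — equivalently, not every continuous function on $S^1$ is a $p$-pseudofunction — which is a standard fact recorded in the references cited above for $F^p(\Z)$. Since $\Gamma(F^2(\Z))=C(S^1)$, this properness is precisely where the hypothesis $p\neq 2$ enters outside of \autoref{prop:FirstStep}. I expect the real content to already reside in \autoref{prop:FirstStep} (and hence in the structure results on invertible isometries it relies on), so that the remaining difficulties here are bookkeeping: checking that the partition-of-unity functions lie in $F^p(\Z)$ and interact correctly with $\Gamma$ under multiplication, and that the rotation automorphisms transport the conclusion of \autoref{prop:FirstStep} from $V$ to each $h_\theta(V)$.
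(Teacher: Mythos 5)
Your proposal is correct and follows essentially the same route as the paper's own proof: argue by contradiction, use the rotation automorphisms together with compactness to cover $S^1$ by translates of a set disjoint from $V$, apply \autoref{prop:FirstStep} to each translate, patch the local extensions with a smooth partition of unity (whose members lie in $F^p(\Z)$ via absolutely summable Fourier coefficients) to conclude that $\Gamma(F^p(\Z))=C(S^1)$, and then invoke the known non-surjectivity of the Gelfand transform for $p\neq 2$. The only cosmetic difference is that the paper cites this last fact precisely, namely part~(2) of Corollary~3.20 in \cite{GarThi14arX:LpGpAlg}, where you appeal to it as a standard fact from the references.
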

\begin{proof}
We argue by contradiction, so let $V$ be an open subset of $S^1$ as in the statement, and suppose that there exists
$q\in [1,\I)$ such that $A_V$ can be isometrically represented on an $L^q$-space.

Let $f\in C(S^1)$. We claim that $f$ belongs to $\Gamma(F^p(\Z))$. Once we prove this, it will follow from part~(2)
of Corollary~3.20 in \cite{GarThi14arX:LpGpAlg} that $p=2$, and hence the proof will be complete.

Let $W$ be a nonempty open subset of $S^1$ such that $V\cap W=\emptyset$. With the notation used in the comments before this
theorem, and using compactness of $S^1$, find $n\in\N$ and $\theta_1,\ldots,\theta_n\in\R$ such that
$\bigcup\limits_{j=1}^n h_{\theta_j}(W)=S^1$.
For $j\in \{1,\ldots,n\}$, set $V_j=h_{\theta_j}(V)$ and $W_j=h_{\theta_j}(W)$. There is an isometric isomorphism
\[A_{h_{\theta_j}(V)}\cong A_V,\]
so the Banach algebra $A_{h_{\theta_j}(V)}$ can be isomorphically represented on an $L^q$-space. It
follows from \autoref{prop:FirstStep} that every continuous function on $S^1\setminus V_j$ is in the image
of $\Gamma_{V_j}$. In particular, every continuous function on $\overline{W_j}$ is in the image of $\Gamma_{V_j}$.

From now on, we identify the algebras $A, A_{V_1},\ldots,A_{V_n}$ with their images under their Gelfand transforms.
In particular, for $j=1,\ldots,n$, every continuous function on $\overline{W_j}$ is the restriction of a function in $A$.

Choose continuous functions $k_1,\ldots,k_n\colon S^1\to \R$ satisfying
\begin{enumerate}
\item $0\leq k_j\leq 1$ for $j=1,\ldots,n$;
\item $\supp(k_j)\subseteq W_j$ for $j=1,\ldots,n$;
\item $\sum\limits_{j=1}^n k_j(\zeta)=1$ for all $\zeta\in S^1$;
\item $k_j$ belongs to $F^p(\Z)$ for $j=1,\ldots,n$ (for example, take $k_j\in C^\I(S^1)$).
\end{enumerate}

For $j=1,\ldots,n$, choose a function $g_j\in F^p(\Z)$ such that $(g_j)|_{W_j}=f|_{W_j}$.
Then the product $g_jk_j$ belongs to $F^p(\Z)$ because each of the factors does.
Since the support of $k_j$ is contained in $W_j$, and $f$ and $g_j$ agree on $W_j$, we have
$fk_j=g_jk_j$ for $j=1,\ldots,n$. Now,
\[f=f\cdot\left(\sum_{j=1}^nk_j \right)=\sum_{j=1}^n g_jk_j,\]
so $f$ belongs to $F^p(\Z)$, and the claim is proved.

We have shown that Gelfand transform $\Gamma\colon F^p(\Z)\to C(S^1)$ is surjective.
Since $F^2(\Z)$ is canonically isomorphic to $C(S^1)$, we must have $p=2$ by part~(2) of Corollary~3.20 in \cite{GarThi14arX:LpGpAlg}. This is a contradiction, and the result follows.
\end{proof}

\begin{rmk}
\label{rmk:mainExtended}
The same argument as in \autoref{prop:FirstStep} and \autoref{thm:main} shows that if $V\subseteq S^1$
is a non-trivial, non-dense open subset, then there is no contractive, injective
representation of $A_V$ on any $L^q$-space \emph{with closed range}.
On the other hand, there certainly are contractive, injective representations of $A_V$ on $L^q$-spaces for every $q\in [1,\I)$.
Indeed, for $q\in [1,\I)$, any commutative semisimple Banach algebra $B$ can be represented on an $L^q$-space (although rarely can it be represented with closed range).
Indeed, $C_0(\Max(B))$ is isometrically represented on an $L^q$-space for every $q$, and such a representation can be
composed with the Gelfand transform to get the desired contractive representation of $B$ on an $L^q$-space.
\end{rmk}

\begin{rmk}
In contrast to \autoref{thm:main}, some (non-trivial) quotients of $F^p(\Z)$ are isometrically
representable on $L^p$-spaces.
For example, if $V$ is the complement of the set of $n$-th roots of unity in $S^1$ for some $n\in\N$, then $F^p(\Z)/I_V$ is canonically isometrically isomorphic to $F^p(\Z_n)$.
(This identification is induced by the quotient map $\Z\to\Z_n$.)
An analogous statement holds for the translates of $V$.
We do not know, however, whether these are the only quotients of $F^p(\Z)$ that can be represented on $L^p$-spaces.
We therefore suggest:
\end{rmk}

\begin{pbm}
\label{pbm:Characterize}
Characterize those ideals $I$ of $F^p(\Z)$ such that $F^p(\Z)/I$ can be isometrically represented on an $L^p$-space.
\end{pbm}

We do not know whether $F^p(\Z)$ has spectral synthesis, except for $p=1$ (in which case it does
not) and $p=2$ (in which case it does). Since Banach algebras generated by an invertible isometry
of an $L^p$-space together with its inverse are automatically semisimple by the results in
\cite{GarThi14arX:LpGenInvIsom}, we conclude that for $F^p(\Z)/I$ to be isometrically representable
on an $L^p$-space, there must exist an open subset $V\subseteq S^1$ such that $I=I_V$ (and $V$ must
be dense by \autoref{thm:main}). This means that \autoref{pbm:Characterize} can be solved without
knowing whether $F^p(\Z)$ has spectral synthesis, that is, without knowing whether every ideal of
$F^p(\Z)$ is of the form $I_V$.

We do not know whether density of $V$ is sufficient for $F^p(\Z)/I_V$ to be representable on an $L^p$-space.

We conclude this paper with an observation. If $A$ is a Banach algebra and $a\in A$, we denote by
$B(a)$ the smallest Banach subalgebra of $A$ containing $a$.

\begin{rmk}
Suppose that $p$ is not an even integer, and let $V\subseteq S^1$ be a nonempty, non-dense open subset.
We let $A=F^p(\Z)$, $I_V$ and $A_V$ be as in \autoref{ntn:A_IV}.
It follows from Corollary~1.5.2.3 in \cite{Jun96Habilitationsschrift} that $A_V$ is an $\SL^p$-operator algebra, so there exists an invertible isometry $v$ of an $\SL^p$-space $E$ such that $B(u)$ is isometrically isomorphic to $A_V$.
By Theorem~I in \cite{Rud76LpIsomEquim}, there exist a $L^p$-space $F$ containing $E$ as a closed subspace, and a canonical invertible isometry $w$ of $F$ extending $v$.
By naturality of the construction, one may be tempted to guess that $B(v)$ and $B(w)$ are isometrically isomorphic.
However, \autoref{thm:main} shows that this is not the case.
\end{rmk}


\providecommand{\bysame}{\leavevmode\hbox to3em{\hrulefill}\thinspace}
\providecommand{\noopsort}[1]{}
\providecommand{\mr}[1]{\href{http://www.ams.org/mathscinet-getitem?mr=#1}{MR~#1}}
\providecommand{\zbl}[1]{\href{http://www.zentralblatt-math.org/zmath/en/search/?q=an:#1}{Zbl~#1}}
\providecommand{\jfm}[1]{\href{http://www.emis.de/cgi-bin/JFM-item?#1}{JFM~#1}}
\providecommand{\arxiv}[1]{\href{http://www.arxiv.org/abs/#1}{arXiv~#1}}
\providecommand{\MR}{\relax\ifhmode\unskip\space\fi MR }
\providecommand{\MRhref}[2]{%
  \href{http://www.ams.org/mathscinet-getitem?mr=#1}{#2}
}
\providecommand{\href}[2]{#2}

\end{document}